\newtheorem{thm}{Theorem}[section]
\newtheorem{prop}[thm]{Proposition}
\theoremstyle{definition}
\newtheorem{defn}[thm]{Definition}
\theoremstyle{remark}
\theoremstyle{definition}
\newtheorem{ex}[thm]{Example}
\theoremstyle{definition}
\theoremstyle{remark}
\numberwithin{equation}{section}
\newcommand{\LLL}{\mathbb{L}}
\newcommand{\N}{\mathbb{N}}
\newcommand{\Z}{\mathbb{Z}}
\newcommand{\HH}{\mathrm{H}}
\newcommand{\aaa}{\mathfrak{a}}
\newcommand{\bbb}{\mathfrak{b}}
\newcommand{\coker}{\operatorname{coker}}
\newcommand{\EExt}{\mathscr{E}\text{\kern -3pt {\calligra\large xt}}\ }
\newcommand{\HHom}{\mathscr{H}\text{\kern -3pt {\calligra\Large om}}\, }
\newcommand{\HHH}{\operatorname{H}\hspace{-0.1em}\operatorname{H}}
\newcommand{\im}{\operatorname{Im}}
\newcommand{\LS}{\operatorname{LS}}
\newcommand{\Tor}{\operatorname{Tor}}
\newcommand{\gp}{\text{gp}}
\newcommand{\arrowr}{\arrow{r}}
\newcommand{\arrowd}{\arrow{d}}
\newcommand{\limdir}{\varinjlim}
\newcommand{\Lim}[1]{\raisebox{0.7ex}{\scalebox{0.8}{$\displaystyle\limdir_{i}\;$}}}
\newcommand*{\longhookrightarrow}{\ensuremath{\lhook\joinrel\relbar\joinrel\rightarrow}}
\def\@tocline#1#2#3#4#5#6#7{\relax
  \ifnum #1>\c@tocdepth 
  \else
    \par \addpenalty\@secpenalty\addvspace{#2}%
    \begingroup \hyphenpenalty\@M
    \@ifempty{#4}{%
      \@tempdima\csname r@tocindent\number#1\endcsname\relax
    }{%
      \@tempdima#4\relax
    }%
    \parindent\z@ \leftskip#3\relax \advance\leftskip\@tempdima\relax
    \rightskip\@pnumwidth plus4em \parfillskip-\@pnumwidth
    #5\leavevmode\hskip-\@tempdima
      \ifcase #1
       \or\or \hskip 1em \or \hskip 2em \else \hskip 3em \fi%
      #6\nobreak\relax
    \hfill\hbox to\@pnumwidth{\@tocpagenum{#7}}\par
    \nobreak
    \endgroup
  \fi}
\begin{document}

\title[Two results on the logarithmic cotangent complex]{Two results on the logarithmic cotangent complex}%
\author{Jesús Conde-Lago and Javier Majadas}%
\address{Departamento de Matem\'aticas, Facultad de Matem\'aticas, Universidad de Santiago de Compostela, E15782 Santiago de Compostela, Spain}%
\email{jesus.conde@usc.es, j.majadas@usc.es}%

\thanks{This work was partially supported by Agencia Estatal de Investigaci\'on (Spain), grant PID2020-115155GB-I00 (European FEDER
support included, UE) and by Xunta de Galicia through the Competitive Reference Groups (GRC), ED431C 2019/10.}

\keywords{logarithmic cotangent complex}%
\thanks{2010 {\em Mathematics Subject Classification.} 14A21}

\maketitle

\begin{abstract}
  In this paper we give two results on the logarithmic cotangent complex: we construct logarithmic analogues to the complex of Lichtenbaum and Schlessinger and to Quillen's fundamental spectral sequence.
\end{abstract}

Andr\'e-Quillen homology first appeared in dimension 0 and 1 implicitly in \cite{EGA4}. Subsequently, Lichtenbaum and Schlessinger in \cite{LS} introduce the second homology module as the homology of a simple complex. The general definition was then given by Andr\'e \cite{An-MS} and Quillen \cite{Quillen-MIT} using simplicial methods. The fact that the second Andr\'e-Quillen homology module agrees with that of Lichtenbaum and Schlessinger was proved in \cite[15.12]{An-1974}. Due to its simplicity, the complex of Lichtenbaum and Schlessinger is still useful if we do not need higher homology modules. It is for example the only one used in the  entire book \cite{MR}.

A logarithmic version of the cotangent complex was introduced by Gabber (see \cite{Ol}). Since the second homology module of the logarithmic cotangent complex detects Kato's log regularity \cite{CM}, a logarithmic analogue of the complex of Lichtenbaum and Schlessinger would be desirable. This is the first purpose of this paper. The second one is to give a logarithmic analogue of Quillen's fundamental spectral sequence \cite[Theorem 6.8]{Quillen-MIT}
	\[ E_{p,q}^2 = H_{p+q}(S^q\LLL_{B|C}) \Rightarrow \Tor_{p+q}^C(B,B) , \]
for a surjective ring homomorphism $C\to B$.\footnote{The same day we are preparing this paper to upload it to arXiv, it has appeared in that site the paper "A Hochschild-Kostant-Rosenberg theorem and residue sequences for logarithmic Hochschild homology" by Federico Binda, Tommy Lundemo, Doosung Park and Paul Arne Østvær (arXiv:2209.14182) where the authors give an analogous spectral sequence in the case of Hochschild homology, that is, when $C\to B$ is the diagonal homomorphism of a homomorphism. In that paper, they even compute all the terms E$^2_{pq}$, while we treat only the terms with $q< 2$.}

We will use the notation on the logarithmic setting as in \cite[Sections 2 and 3]{CM}, and on the logarithmic cotangent complex as in \cite[Section 4]{CM}. For instance, a \emph{prelog ring} $(A,M,\alpha)$ (or simply $(A,M)$ if there is no confusion) consists of a commutative ring $A$, a commutative monoid $M$ and a multiplicative homomorphism of monoids $\alpha\colon M\to A$. When $m\in M$, we sometimes write also $m$ for its image $\alpha (m)$ in $A$. A homomorphism of prelog rings \[f=(f^\sharp,f^\flat)\colon  (A,M,\alpha_A)\longrightarrow (B,N,\alpha_B)\] is a homomorphism of rings $f^\sharp\colon A\to B$ together with a homomorphism of monoids $f^\flat\colon M\to N$ such that $\alpha_B f^\flat =f^\sharp \alpha_A$.

\section{A complex to compute $H_2$}\label{S::Compute}
	\noindent

		Let $(A,M)\to (B,N)$ be a homomorphism of prelog rings. Let $(A,M)\to (R,P_0)\to (B,N)$ be a factorization with $R\to B$ and $h\colon P_0\to N$ surjective homomorphisms, where $P_0=M\oplus\N^X$ (with $h$ extending $M\to N$) and $R=\Z[P_0]\otimes_{\Z[M]}A[Y]= A[X\cup Y]$ (with $R\to B$ extending $A\to B$).		
		
		We consider $W_0=\ker(P_0^{\gp}\to N^{\gp})$, $J=\ker(\Z[P_0]\to \Z[N])$ and $I=\ker(R\to B)$. Let
		\[ 0\longrightarrow V \longrightarrow G \overset{\pi}{\longrightarrow}J \longrightarrow 0 \]
		be an exact sequence of $\Z[P_0]$-modules with $G$ a free $\Z[P_0]$-module,
		\[ 0\longrightarrow U \longrightarrow F \overset{\tau}{\longrightarrow}I \longrightarrow 0 \]
		and exact sequence of $R$-modules with $F$ a free $R$-module containing $R\otimes_{\Z[P_0]}G$ as a direct summand and $\tau$ extending $G\overset{\pi}{\to}J\to I$, and
		\[ 0 \longrightarrow W_1 \longrightarrow Q_1 \longrightarrow W_0 \longrightarrow 0 \]
		an exact sequence of $\Z$-modules with $Q_1$ a free $\Z$-module.
		
		Let $U_0=\{ \tau(x)y-\tau(y)x\in U \; | \; x,y\in F \}$, which is an  $R$-submodule of $U$, and $V_0=\{ \pi(x)y-\pi(y)x\in V \; | \; x,y\in G \}$ which is a $\Z[P_0]$-submodule of $V$. The $R$-module $U/U_0$ is in fact a $B$-module since $IU\subset U_0$ and similarly, $V/V_0$ is a $\Z[N]$-module.

Consider the following commutative diagram of $B$-modules
		\begin{equation}\label{LSdiagram}
		\begin{tikzpicture}[baseline= (a).base]
		\node[scale=0.9] (a) at (0,0){
			\hspace{-1.8cm}
			\begin{tikzcd}[row sep=1.8em, column sep=-4em]
			\qquad\quad B\otimes_{\Z[N]}V/V_0 \qquad \ar[shorten <= -2em]{rrr}{\beta_2} \ar{ddd}{\alpha_2} \ar{dr}{d_2}&[-1.75em] & &[-0.5cm] B\otimes_{\Z}W_1 \ar{dr}{D_2}&[5em] &[4em] \\
			& B\otimes_{\Z[P_0]}G=B\otimes_{\Z[N]}G/JG \ar{rrr}{\beta_1} \ar{ddd}{\alpha_1} \ar{dr}{d_1}& & & B\otimes_\Z Q_1\ar{dr}{D_1} & \\
			&  & B\otimes_{\Z[P_0]}\Omega_{\Z[P_0]|\Z[M]} \ar{rrr}{\beta_0} \ar{ddd}{\alpha_0} & & & B\otimes_\Z P_0^\gp/M^\gp\\
			U/U_0 \ar{dr}{\delta_2}& & & & & \\
			& F/IF \ar{dr}{\delta_1}& & & & \\
			& & B\otimes_R\Omega_{R|A} & & &
			\end{tikzcd}
			};
			\end{tikzpicture}
		\end{equation}
		where the homomorphisms are defined as follows:
		\begin{itemize}
			\item $\delta_2$, $d_2$ and $D_2$ are induced by the inclusions $U\to F$, $V\to G$ and $W_1\to Q_1$ respectively.
			
			\item $\delta_1$ is the composition
			$F/IF \to I/I^2 \to B\otimes_R\Omega_{R|A}$,
			where the first map is the one induced by $\tau$ and the second one by the canonical derivation $ R\to \Omega_{R|A}$. The map $d_1$ is analogously defined while $D_1$ is induced by the composition
			$Q_1\to W_0\to P_0^\gp\to P_0^\gp/ M_0^\gp$.
			
			\item The homomorphism $\beta_0$ is the one of \cite[Definition 3.9]{CM}: $\beta_0(1\otimes dp):=h(p)\otimes \bar{p}$.
			
			\item The exact sequences
			\[ 0\longrightarrow V\longrightarrow G\longrightarrow J\longrightarrow 0 ,\]
			\[ 0 \longrightarrow W_1 \longrightarrow Q_1 \longrightarrow W_0 \longrightarrow 0 \]
			give a diagram of homomorphisms of $\Z[N]$-modules
			\[
			\begin{tikzcd}[column sep=1.8em,row sep=3.8em]
			& V/JV \arrowr & G/JG \arrowr &
			J/J^2 \arrowr\arrowd{v_h} & 0 \\
			0 \arrowr & \Z[N]\otimes_\Z W_1 \arrowr & \Z[N]\otimes_\Z Q_1 \arrowr & \Z[N]\otimes_\Z W_0 \arrowr & 0
			\end{tikzcd}
			\]
			where $v_h$, which is defined by
			\[ \overline{\lambda(p-p')}\longmapsto \lambda h(p')\otimes p(p')^{-1} \]
			(whenever $\lambda \in \Z$, $h(p)=h(p')$), is the map of \cite[Definition 3.1]{CM} (see \cite[Remark 3.2]{CM}). Since $G/JG= G\otimes_{\Z[P_0]}\Z[N]$ is a projective $\Z[N]$-module, we have a commutative diagram
			\[
			\hspace{-1cm}\begin{tikzcd}[column sep=2em,row sep=4em]
			& V/JV \arrowr{f} \arrowd[dashed]{\beta'_2} & G/JG \arrowr \arrowd{\beta'_1} &
			J/J^2 \arrowr\arrowd{v_h} & 0 \\
			0 \arrowr & \Z[N]\otimes_\Z W_1 \arrowr{g} & \Z[N]\otimes_\Z Q_1 \arrowr & \Z[N]\otimes_\Z W_0 \arrowr & 0
			\end{tikzcd}
			\]
			
			We define $\beta_1=B\otimes_{\Z[N]}\beta'_1$. Since $V_0\subset JG$, $f(V_0/JV)=0$ and then $\beta'_2(V_0/JV)=0$ by the injectivity of $g$. So $\beta'_2$ gives a map $\tilde{\beta'_2}\colon V/V_0\to\Z[N]\otimes_\Z W_1$ and we define $\beta_2=B\otimes_{\Z[N]}\tilde{\beta}_0$.
			
			\item Finally, $\alpha_2,\alpha_1,\alpha_0$ are the obvious maps.
		\end{itemize}
		The commutativity of the square $D_1\beta_1=\beta_0d_1$ follows from the commutativity of the squares
		\[
		\begin{tikzcd}[column sep=3.5em,row sep=4em]
		G/JG \arrowr\arrowd{\beta_1'} & J/J^2 \arrowd{v_h} \\
		\Z[N]\otimes_\Z Q_1  \arrowr & \Z[N]\otimes_\Z W_0
		\end{tikzcd}
		\]
		and
		\[
		\begin{tikzcd}[column sep=3.5em,row sep=4em]
		J/J^2 \arrowr\arrowd{v_h} & \Z[N]\otimes_{\Z[P_0]}\Omega_{\Z[P_0]|\Z[M]} \arrowd \\
		\Z[N]\otimes_\Z W_0  \arrowr & \Z[N]\otimes_\Z P_0^\gp/M^\gp
		\end{tikzcd}
		\]
where the right map in this square is defined similarly to $\beta_0$.

		The commutativity of the remaining squares is clear.

We have $\delta_1\delta_2 =0$, $d_1d_2 =0$, $D_1D_2 =0$.

		\begin{defn}
			We define the complex $\LS_{(B,N)|(A,M)}$ as follows: $(\LS_{(B,N)|(A,M)})_i$ is the pushout of $(\alpha_i, \beta_i)$ for $i\in \{0,1,2\}$ and zero for $i\notin \{0,1,2\}$, and the differential is the one induced by the maps $\delta_i$, $d_i$, $D_i$.
		\end{defn}
		
		\begin{prop}\label{1.2}
			In this context, we consider the complex
			\[K:= \big(T\otimes_\Z W_1 \overset{D_2}{\longrightarrow} T\otimes_\Z Q_1 \overset{D_1}{\longrightarrow} T\otimes_\Z P_0^\gp/M^\gp\big)\]
			for any $\Z$-module $T$. Then:
			\begin{enumerate}
				\item[(i)] $H_2(K)=\Tor_1^\Z\big(T,\ker(M^\gp\to N^\gp)\big)$
				\item[(ii)] We have an exact sequence
				\[ \hspace{-1cm} 0\to T\otimes_\Z \ker(M^\gp\to N^\gp)\to H_1(K)\to \Tor_1^\Z(T,N^\gp/\im(M^\gp\to N^\gp)) \to 0. \]
				\item[(iii)] $H_0(K)=T\otimes_\Z N^\gp/\im(M^\gp\to N^\gp)$
			\end{enumerate}
		\end{prop}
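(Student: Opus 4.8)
The plan is to recognize $K$ as $T\otimes_\Z\widetilde K$ for a complex $\widetilde K$ of \emph{free} abelian groups, compute the homology of $\widetilde K$ directly, and then read off (i)--(iii) from the universal coefficient theorem. First I would record the freeness statements: since $P_0=M\oplus\N^X$, passing to group completions gives $P_0^\gp=M^\gp\oplus\Z^{(X)}$, so $P_0^\gp/M^\gp$ is a free $\Z$-module; $Q_1$ is free by hypothesis; and $W_1\subseteq Q_1$, being a subgroup of a free abelian group, is free as well. Hence
\[ \widetilde K:=\big(W_1\longrightarrow Q_1\longrightarrow P_0^\gp/M^\gp\big) \]
(with the differentials of $K$, before tensoring, placed in degrees $2,1,0$) is a complex of free $\Z$-modules with $K=T\otimes_\Z\widetilde K$.

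Next I would compute $H_*(\widetilde K)$. We have $H_2(\widetilde K)=0$ because $W_1\to Q_1$ is injective. For the other two, note that $h$ surjective implies $h^\gp\colon P_0^\gp\to N^\gp$ surjective, so $P_0^\gp/W_0\cong N^\gp$, and that the differential $Q_1\to P_0^\gp/M^\gp$ factors as the surjection $Q_1\twoheadrightarrow W_0$ followed by $\phi\colon W_0\hookrightarrow P_0^\gp\twoheadrightarrow P_0^\gp/M^\gp$. A short diagram chase (using $\im(W_1\to Q_1)=\ker(Q_1\to W_0)$ and surjectivity of $Q_1\twoheadrightarrow W_0$) gives $H_1(\widetilde K)=\ker\phi$ and $H_0(\widetilde K)=\coker\phi$. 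Now $\ker\phi=W_0\cap M^\gp=\ker(M^\gp\to N^\gp)$, since the restriction to $M^\gp$ of the map $P_0^\gp\to N^\gp$ with kernel $W_0$ is the canonical map $M^\gp\to N^\gp$; and $\coker\phi=P_0^\gp/(M^\gp+W_0)=N^\gp/\im(M^\gp\to N^\gp)$, using $P_0^\gp/W_0\cong N^\gp$ again. Equivalently, these identifications amount to the $4$-term exact sequence $0\to\ker(M^\gp\to N^\gp)\to W_0\xrightarrow{\phi}P_0^\gp/M^\gp\to N^\gp/\im(M^\gp\to N^\gp)\to 0$.

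Finally, the universal coefficient theorem applied to the complex of free abelian groups $\widetilde K$ furnishes, for each $n$, a short exact sequence $0\to T\otimes_\Z H_n(\widetilde K)\to H_n(K)\to\Tor_1^\Z(T,H_{n-1}(\widetilde K))\to 0$; substituting the values just computed yields (iii) at $n=0$ (where $H_{-1}(\widetilde K)=0$ kills the right-hand term), (ii) at $n=1$, and (i) at $n=2$ (where $H_2(\widetilde K)=0$ kills the left-hand term). The main thing to get right is not any single deduction but the bookkeeping: specifically the freeness of $P_0^\gp/M^\gp$ — which is exactly where the particular shape $P_0=M\oplus\N^X$ of the chosen resolution is used — and the elementary computation of $\ker\phi$ and $\coker\phi$. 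One could also dispense with the universal coefficient theorem and instead tensor the short exact sequences $0\to\ker\phi\to W_0\to\im\phi\to 0$ and $0\to\im\phi\to P_0^\gp/M^\gp\to\coker\phi\to 0$ with $T$, using that $\im\phi$ is free (a subgroup of $P_0^\gp/M^\gp$) to kill the intervening $\Tor_1^\Z$'s; this recovers (i)--(iii) by a direct chase.
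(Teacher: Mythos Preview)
Your proof is correct, and it is considerably more economical than the paper's. The observation that all three terms $W_1$, $Q_1$, $P_0^\gp/M^\gp\cong\Z^{(X)}$ are free abelian groups lets you compute $H_*(\widetilde K)$ once and for all and then read off (i)--(iii) from a single invocation of the universal coefficient theorem; the identifications $\ker\phi=\ker(M^\gp\to N^\gp)$ and $\coker\phi=N^\gp/\im(M^\gp\to N^\gp)$ are exactly right.

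The paper instead argues piece by piece on the tensored complex. For $H_2(K)$ it applies $T\otimes_\Z-$ to $0\to W_1\to Q_1\to W_0\to 0$ to get $H_2(K)=\Tor_1^\Z(T,W_0)$, and then uses a $3\times 3$ diagram (the left column being $0\to\ker(M^\gp\to N^\gp)\to W_0\to W_0/\ker(M^\gp\to N^\gp)\to 0$, with the quotient free as a subgroup of $\Z^X$) to identify this with $\Tor_1^\Z(T,\ker(M^\gp\to N^\gp))$. For $H_1$ and $H_0$ it introduces auxiliary complexes $E,E',\tilde E',E''$ built from $\varphi^{-1}(\ker(M^\gp\to N^\gp))$ and extracts the exact sequence of (ii) from the associated long exact sequence in homology. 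The underlying freeness facts are the same as yours (your remark that $\im\phi\subset P_0^\gp/M^\gp$ is free is precisely the paper's statement that $W_0/\ker(M^\gp\to N^\gp)\subset\Z^X$ is free), but the paper never invokes the universal coefficient theorem, opting for explicit exact-sequence chases. Your route is shorter and more transparent; the paper's route has the minor advantage of being self-contained, but at the cost of several ad hoc constructions.
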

		\begin{proof}
			We have $H_2(K)=\ker(T\otimes_\Z W_1 \to T\otimes_\Z Q_1)$. Applying $T\otimes_\Z -\ $ to the exact sequence of $\Z$-modules
			\[ 0 \longrightarrow W_1 \longrightarrow Q_1 \longrightarrow W_0 \longrightarrow 0 \]
			we obtain an exact sequence
			\[0=\Tor_1^\Z(T,Q_1)\to \Tor_1^\Z(T,W_0) \to T\otimes_\Z W_1 \to T\otimes_\Z Q_1 \to T\otimes_\Z W_0 \to 0 \]
			and then $H_2(K)=\Tor_1^\Z(T,W_0)$.
			
			Consider the diagram of exact rows and columns defining the lower row
			\begin{equation}\label{diagram_4.3.(2)}
				\hspace{-0.25em}\begin{tikzcd}[column sep=0.2em,row sep=3em]
				&[1em] 0\arrowd & 0\arrowd & 0\arrowd &[1em] \\
				0 \arrowr & \ker(M^\gp\to N^\gp) \arrowr\arrowd & M^\gp \arrowr\arrowd &
				\im(M^\gp\to N^\gp) \arrowr\arrowd & 0 \\
				0 \arrowr & W_0 \arrowr\arrowd & P_0^\gp=M^\gp\oplus\Z^X \arrowr\arrowd & N^\gp \arrowr\arrowd & 0 \\
				0 \arrowr & W_0/\ker(M^\gp\to N^\gp) \arrowr\arrowd & \Z^X \arrowr\arrowd &
				N^\gp/\im(M^\gp\to N^\gp) \arrowr\arrowd & 0 \\
				& 0 & 0 & 0 &
				\end{tikzcd}
			\end{equation}
			Since the lower row is exact,
			\[ \Tor_i^\Z\big(T,W_0/\ker(M^\gp\to N^\gp)\big) = \Tor_{i+1}^\Z\big(T,N^\gp/\im(M^\gp\to N^\gp)\big)\]
			and this last module vanishes for $i+1\geq 2$ given that $\Z$ is a principal ideal\linebreak domain. So, from the left column we deduce
			\[ H_2(K)= \Tor_{1}^\Z(T,W_0)= \Tor_1^\Z\big(T,\ker(M^\gp\to N^\gp)\big) . \]
			
			Consider now the diagram
			\[
			\hspace{-0.5em}\begin{tikzcd}[column sep=0.78em,row sep=3.5em]
			& & & 0\arrowd & & &\\
			& & & \ker(M^\gp\to N^\gp) \arrowd & & &\\
			0 \arrowr & W_1 \arrowr & Q_1 \arrowr{\varphi}\arrow[dashed]{dr}{\sigma} & W_0 \arrowr\arrowd & 0 & &\\
			& & 0 \arrowr & W_0/\ker(M^\gp\to N^\gp) \arrowr\arrowd & \Z^X \arrowr & N^\gp/\im(M^\gp\to N^\gp) \arrowr& 0\\
			& & & 0 & & &
			\end{tikzcd}
			\]
			We have exact sequences
			\begin{equation}\label{diagram_4.3.(3)}
			0 \hspace{-0.06em}\to\hspace{-0.06em} \varphi^{-1}(\ker(M^\gp\hspace{-0.2em}\to\hspace{-0.2em} N^\gp)) \hspace{-0.06em}\to\hspace{-0.06em} Q_1\hspace{-0.06em}\to\hspace{-0.06em} \Z^X \to N^\gp/\im(M^\gp\hspace{-0.2em}\to\hspace{-0.2em} N^\gp) \hspace{-0.06em}\to\hspace{-0.06em} 0 ,
			\end{equation}
			and
			\[ 0 \to W_1 \to \varphi^{-1}(\ker(M^\gp\to N^\gp)) \to \ker(M^\gp\to N^\gp) \to 0 . \]
			Therefore we have a vertical exact sequence of (horizontal) complexes
			\[
			\begin{tikzcd}[column sep=2.5em,row sep=3em]
			& & &[-0.5em] \;\arrow[bend right=20,swap,dash]{dddddd} &[-3.2em] &[-4.5em] \;\arrow[bend left=20,swap,dash]{dddddd} \\[-3em]
			0 \arrowd & 0\arrowd & 0\arrowd & & 0\arrowd & \\
			W_1 \arrowd\arrowr & Q_1 \arrowr\arrow[equal]{d} & \Z^X \arrow[shorten <= 1.5em]{rr}\arrow[equal]{d} & & N^\gp/\im(M^\gp\to N^\gp) \arrow[equal]{d} & \\
			\varphi^{-1}(\ker(M^\gp\to N^\gp)) \arrowd\arrowr & Q_1 \arrowr\arrowd & \Z^X \arrow[shorten <= 1.5em]{rr}\arrowd & & N^\gp/\im(M^\gp\to N^\gp) \arrowd & \\
			\ker(M^\gp\to N^\gp) \arrowd \arrowr & 0 \arrowr\arrowd & 0 \arrow[shorten <= 2.5em]{rr}\arrowd & &
			0 \arrowd & \\
			0 & 0 & 0 & & 0 & \\[-3em]
			& & & \; & & \;
			\end{tikzcd}
			\]
			Applying $T\otimes_\Z-$ we obtain a diagram of (horizontal) complexes
			\begin{equation}\label{diagram_4.3.(4)}
				\begin{tikzcd}[column sep=2em,row sep=3em]
					E'\colon \quad &[-3em] T\otimes_\Z W_1 \arrowr\arrowd & T\otimes_\Z Q_1 \arrowr\arrow[equal]{d} & T\otimes_\Z \Z^X \arrow[equal]{d}  \\
					E\colon \quad & T\otimes_\Z \varphi^{-1}(\ker (M^\gp \to N^\gp )) \arrowr\arrowd & T\otimes_\Z Q_1 \arrowr & T\otimes_\Z \Z^X  \\
					E''\colon \quad & T\otimes_\Z \ker (M^\gp \to N^\gp ) \arrowd & & \\
					& 0 & &
				\end{tikzcd}
			\end{equation}
			where the columns correspond to degrees 2, 1 and 0 respectively (from left to right).
			
			We have
			\[ H_n(E)= \Tor_n^\Z\big(T,N^\gp / \im(M^\gp \to N^\gp ) \big) \]
			for $n=0,1$ by the exactness of \eqref{diagram_4.3.(3)}, and the fact that $Q_1$ and $\Z^X$ are free $\Z$-modules.
			
			On the other hand, $W_0/\ker (M^\gp \to N^\gp )$ is a free $\Z$-module	by the lower row of \eqref{diagram_4.3.(2)}, and then the exact sequence
			\[ 0 \longrightarrow \varphi^{-1}(\ker (M^\gp \to N^\gp )) \longrightarrow Q_1 \overset{\sigma}{\longrightarrow} W_0/\im(M^\gp \to N^\gp ) \longrightarrow 0 \]
			splits. Therefore
			\[ 0 \longrightarrow T\otimes_\Z\varphi^{-1}(\ker (M^\gp \to N^\gp )) \longrightarrow T\otimes_\Z Q_1 \]
			is injective and then $H_2(E)=0$.
			
			Consider the left column of \eqref{diagram_4.3.(4)} and let
			\begin{align*}
				\Lambda &:= \ker\big[ T\otimes_\Z\varphi^{-1}(\ker (M^\gp \to N^\gp )) \to T\otimes_\Z\ker (M^\gp \to N^\gp)\big]\\
				&\; = \im \big[ T\otimes_\Z W_1 \to T\otimes_\Z\varphi^{-1}(\ker (M^\gp \to N^\gp))\big] \\
				&\;  = \im (T\otimes_\Z W_1 \to T\otimes_\Z Q_1)     .
			\end{align*}
			
			We have a vertical exact sequence of (horizontal) complexes
		\[
		\begin{tikzcd}[column sep=2em,row sep=3.1em]
		&[-3em] 0\arrowd & 0\arrowd & 0\arrowd \\
		\tilde{E'}\colon \quad & \Lambda \arrowr\arrowd & T\otimes_\Z Q_1 \arrowr\arrow[equal]{d} & T\otimes_\Z \Z^X \arrow[equal]{d}  \\
		E\colon \quad & T\otimes_\Z \varphi^{-1}(\ker (M^\gp \to N^\gp )) \arrowr\arrowd & T\otimes_\Z Q_1 \arrowr\arrowd & T\otimes_\Z \Z^X\arrowd \\
		E''\colon \quad & T\otimes_\Z \ker (M^\gp \to N^\gp ) \arrowr\arrowd & 0 \arrowr & 0 \\
		& 0 & &
		\end{tikzcd}
		\]
		and taking the homology exact sequence
			\[ 0 = H_2(E) \to H_2(E'')= T\otimes_\Z\ker (M^\gp \to N^\gp ) \to H_1(\tilde{E'})= H_1(E') \to \]
			\[ \to H_1(E)=\Tor_1^\Z(T,N^\gp/\im(M^\gp\to N^\gp)) \to H_1(E'')=0 \]
			we obtain the desired exact sequence
			\[ 0\to T\otimes_\Z \ker(M^\gp\to N^\gp)\to H_1(K)\to \Tor_1^\Z(T,N^\gp/\im(M^\gp\to N^\gp)) \to 0. \]
			
			Finally, $H_0(K)= H_0(E)= T\otimes_\Z N^\gp/\im(M^\gp\to N^\gp)$.			
		\end{proof}

\begin{thm}
The homology modules of the logarithmic cotangent complex of Gabber can be computed as the homology of the complex $\LS_{(B,N)|(A,M)}$.
\end{thm}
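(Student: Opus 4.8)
The plan is to imitate André's proof of the classical statement that the Lichtenbaum–Schlessinger complex computes $H_i(\LLL_{B|A})$ for $i\le 2$, carrying along the extra monoid and group data by means of Proposition \ref{1.2}. Since $\LS_{(B,N)|(A,M)}$ is concentrated in degrees $0,1,2$, and the same is true of the part of $\LLL_{(B,N)|(A,M)}$ that one can hope to describe explicitly, it is enough to construct natural isomorphisms $H_i(\LS_{(B,N)|(A,M)})\cong H_i(\LLL_{(B,N)|(A,M)})$ for $i=0,1,2$, the remaining degrees being zero on both sides.

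The first step is a reduction via the transitivity triangle of $(A,M)\to(R,P_0)\to(B,N)$. Since $(R,P_0)=(A[X\cup Y],M\oplus\N^X)$ is the free prelog ring over $(A,M)$ on the set $X\sqcup Y$, we have $\LLL_{(R,P_0)|(A,M)}\simeq\Omega_{(R,P_0)|(A,M)}$ in degree $0$, a free $R$-module, and a direct check (using \cite[Definition 3.9]{CM}) shows that $B\otimes_R\Omega_{(R,P_0)|(A,M)}$ is exactly the pushout of $(\alpha_0,\beta_0)$, i.e. $(\LS_{(B,N)|(A,M)})_0$. From the long exact homology sequence of the triangle, together with $\Omega_{(B,N)|(R,P_0)}=0$ (both $R\to B$ and $P_0\to N$ are surjective), one gets $H_i(\LLL_{(B,N)|(A,M)})\cong H_i(\LLL_{(B,N)|(R,P_0)})$ for $i\ge 2$ and a four-term exact sequence
\[0\to H_1(\LLL_{(B,N)|(A,M)})\to H_1(\LLL_{(B,N)|(R,P_0)})\xrightarrow{\ \theta\ } B\otimes_R\Omega_{(R,P_0)|(A,M)}\to\Omega_{(B,N)|(A,M)}\to 0.\]
Thus it suffices to prove that the two-term complex $\big[(\LS_{(B,N)|(A,M)})_2\to(\LS_{(B,N)|(A,M)})_1\big]$ resolves $H_1(\LLL_{(B,N)|(R,P_0)})$ as its cokernel, with kernel $H_2(\LLL_{(B,N)|(R,P_0)})$, and that its map to $(\LS_{(B,N)|(A,M)})_0$ is the composite of this surjection with $\theta$.

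To compute $\LLL_{(B,N)|(R,P_0)}$ in degrees $\le2$ I would either build an explicit cofibrant simplicial prelog ring $(\tilde S_\bullet,\tilde T_\bullet)\to(B,N)$ over $(R,P_0)$, correct through simplicial degree $2$ (degree $3$ for the $H_2$ assertion), from the chosen presentations of $I$, $J$ and $W_0$, or split the map through $(R\otimes_{\Z[P_0]}\Z[N],N)$. Either way one arrives at three "horizontal" complexes whose total object is $\LLL_{(B,N)|(R,P_0)}$ in this range: the classical Lichtenbaum–Schlessinger complex of $A\to B$ through $R$, namely $U/U_0\to F/IF\to B\otimes_R\Omega_{R|A}$ (the bottom-left column of \eqref{LSdiagram}), which computes $H_i(\LLL_{B|A})$ for $i\le2$ by \cite{LS} and \cite[15.12]{An-1974}; the classical complex $\LS_{\Z[N]|\Z[M]}$ through $\Z[P_0]$, base-changed to $B$, namely $B\otimes V/V_0\to B\otimes G/JG\to B\otimes\Omega_{\Z[P_0]|\Z[M]}$ (the middle column), which, because its degree $0$ and $1$ terms are induced from free $\Z[P_0]$-modules, computes $H_i\big(\LLL_{\Z[N]|\Z[M]}\otimes_{\Z[N]}B\big)$ for $i\le2$ without spurious $\Tor$; and the complex $K$ of Proposition \ref{1.2} with $T=B$ (the right column), whose homology is given there in terms of $\ker(M^{\gp}\to N^{\gp})$ and $N^{\gp}/\im(M^{\gp}\to N^{\gp})$. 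The homomorphisms $\alpha_\bullet$ (induced by $V\to U$, $G\to F$, $\Z[M]\to A$) and $\beta_\bullet$ (the maps of \cite[Definitions 3.1 and 3.9]{CM}) realize the connecting morphisms between these pieces, so that forming the pushout of $(\alpha_i,\beta_i)$ degreewise amounts to taking the iterated cone; the point of defining $U_0$ and $V_0$ as the submodules generated by the Koszul-type relations $\tau(x)y-\tau(y)x$ and $\pi(x)y-\pi(y)x$ is, exactly as classically, that it makes the degree-$2$ terms independent of $\tilde S_3$ and $\tilde T_3$.

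The main obstacle is precisely this last independence statement in the logarithmic setting: proving that $(\LS_{(B,N)|(A,M)})_2$, the pushout of $(\alpha_2,\beta_2)$, computes $H_2(\LLL_{(B,N)|(A,M)})$ on the nose, i.e. that no differential into a degree-$3$ term is lost and that the three separate second-homology contributions — $H_2(\LLL_{B|A})$, the relevant $H_2$ coming from the monoid algebras, and $\Tor_1^\Z(B,\ker(M^{\gp}\to N^{\gp}))$ — glue along $\alpha_2,\beta_2$ with no further differentials. Concretely this reduces to checking that the spectral sequence of the triple complex \eqref{LSdiagram} comparing its total complex with the object produced by the transitivity triangles degenerates in the range $i\le2$; this is the logarithmic analogue of André's technical lemma, and together with the right-exactness statement yielding $H_0=\Omega_{(B,N)|(A,M)}$ and the compatibility of all connecting maps with $\alpha_\bullet$ and $\beta_\bullet$, it is where the bulk of the work lies.
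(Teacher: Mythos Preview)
Your strategy via the transitivity triangle of $(A,M)\to(R,P_0)\to(B,N)$ differs from the paper's and, as written, contains a mis-identification that makes the ``main obstacle'' you flag essentially equivalent to the theorem itself. After reducing to $\LLL_{(B,N)|(R,P_0)}$, you assert that the three columns of diagram~\eqref{LSdiagram} --- $\LS_{B|A}$, $B\otimes_{\Z[N]}\LS_{\Z[N]|\Z[M]}$, and $B\otimes_\Z D$ --- have total object $\LLL_{(B,N)|(R,P_0)}$ in this range. But those three complexes compute $H_i(\LLL_{B|A})$, $H_i(\LLL_{\Z[N]|\Z[M]}\otimes_{\Z[N]}B)$, and $H_i(B\otimes_\Z D)$, all over the base $(A,M)$; in particular they carry the degree-$0$ terms $B\otimes_R\Omega_{R|A}$ etc., whereas $\LLL_{(B,N)|(R,P_0)}$ is acyclic in degree $0$. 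Their pushout is by construction $\LS_{(B,N)|(A,M)}$, not a model for $\LLL_{(B,N)|(R,P_0)}$. So the transitivity reduction and the three-complex decomposition are pulling in different directions, and what you call a residual spectral-sequence degeneration check is really the entire comparison you set out to prove.

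The paper bypasses this by replacing transitivity with a Mayer--Vietoris argument. Because $\LS_{(B,N)|(A,M)}$ is a degreewise pushout along $\alpha$ and $\beta$, with $\alpha_0,\alpha_1$ split injective, one extracts for any $B$-module $T$ a long exact sequence relating $H_*(T\otimes_{\Z[N]}\LS_{\Z[N]|\Z[M]})$, $H_*(T\otimes_B\LS_{B|A})\oplus H_*(T\otimes_\Z D)$, and $H_*(T\otimes_B\LS_{(B,N)|(A,M)})$. The genuine logarithmic cotangent complex sits in a parallel long exact sequence, namely that of \cite[Theorem~4.3]{CM}, and a free cofibration\,--\,trivial fibration factorization $(A,M)\to(F,R)\to(B,N)$ furnishes a map between the two sequences. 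On two of every three terms this map is already an isomorphism: by Andr\'e's classical \cite[15.12]{An-1974} for the two ring pieces, and by matching the group piece of \cite[Theorem~4.3]{CM} with $T\otimes_\Z D$ via Proposition~\ref{1.2}. The five-lemma then forces $H_i(\LLL_{(B,N)|(A,M)})\cong H_i(\LS_{(B,N)|(A,M)})$ for $i\le 2$ with no spectral-sequence degeneration to verify; the obstacle you singled out simply does not arise in this packaging.
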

\begin{proof}
We keep the notation as in the beginning of this section. Let $\LS_{\Z[N]|\Z[M]}$ be the complex of $\Z[N]$-modules
\[ V/V_0 \longrightarrow G/JG \longrightarrow \Z[N]\otimes_{\Z[P_0]}\Omega_{\Z[P_0]|\Z[M]} , \]
$\LS_{B|A}$ the complex of $B$-modules
\[ U/U_0 \longrightarrow F/IF \longrightarrow B\otimes_R\Omega_{R|A} \]
and $D$ the complex of $\Z$-modules
\[ W_1 \longrightarrow Q_1 \longrightarrow P_0^\gp/M^\gp . \]
By \cite{LS}, for any $B$-module $T$, $H_*(\LS_{\Z[N]|\Z[M]}\otimes_{\Z[N]}T)$ and $H_*(\LS_{B|A}\otimes_BT)$ do not depend on the choices, and similarly $H_*(D\otimes_\Z T)$ by Proposition~\ref{1.2}.

By definition, we have a commutative diagram of complexes
\[
\begin{tikzcd}[column sep=3em,row sep=4em]
B\otimes_{\Z[N]}\LS_{\Z[N]|\Z[M]} \arrowr{\beta} \arrowd{\alpha} & B\otimes_\Z D \arrowd{\varepsilon} \\
\LS_{B|A}  \arrowr{\gamma} & \LS_{(B,N)|(A,M)}
\end{tikzcd}
\]
which is a pushout in each degree, and so $\coker(\alpha_i)=\coker(\varepsilon_i)$.

Therefore we have a commutative diagram of complexes for any $B$-module $T$
\[
\begin{tikzcd}[column sep=1.2em,row sep=4em]
0 \arrowr & T\otimes_{\Z[N]}\LS_{\Z[N]|\Z[M]} \arrowr{T\otimes\alpha}\arrowd{T\otimes\beta} & T\otimes_B\LS_{B|A} \arrowr\arrowd{T\otimes\gamma} &
\coker(T\otimes\alpha) \arrowr\arrow[equal]{d} & 0 \\
0 \arrowr & T\otimes_\Z D\arrowr{T\otimes\varepsilon} & T\otimes_B\LS_{(B,N)|(A,M)} \arrowr & \coker(T\otimes\varepsilon) \arrowr & 0
\end{tikzcd}
\]
Since $\alpha_1, \alpha_0$ are split injective, so are $\varepsilon_1, \varepsilon_0$, and then both lines are exact except that
\[T\otimes_{\Z[N]}(\LS_{\Z[N]|\Z[M]})_2= T\otimes_{\Z[N]}V/V_0 \longrightarrow T\otimes_{B}U/U_0= T\otimes_{B}(\LS_{B|A})_2 \]
and
\[ T\otimes_{\Z}D_2 = T\otimes_\Z W_1 \longrightarrow T\otimes_{B}(\LS_{(B,N)|(A,M)} )_2 \]
are not necessarily injective. We deduce a commutative diagram of exact rows
\[
\hspace{-0.2cm}\begin{tikzcd}[column sep=1em,row sep=4em]
H_2(T\otimes_{\Z[N]}\LS_{\Z[N]|\Z[M]}) \arrowr\arrowd & H_2(T\otimes_B\LS_{B|A}) \arrowr\arrowd &
H_2(\coker(T\otimes\alpha)) \arrowr \arrow[equal]{d} & \quad \\
H_2(T\otimes_{\Z}D) \arrowr & H_2(T\otimes_B\LS_{(B,N)|(A,M)}) \arrowr &
H_2(\coker(T\otimes\varepsilon)) \arrowr & \quad
\end{tikzcd}
\]
\[
\hspace{1cm}\begin{tikzcd}[column sep=1em,row sep=4em]
\arrowr & H_1(T\otimes_{\Z[N]}\LS_{\Z[N]|\Z[M]}) \arrowr\arrowd & \cdots \arrowr &
H_0(\coker(T\otimes\alpha)) \arrowr\arrow[equal]{d} & 0 \\
\arrowr & H_1(T\otimes_{\Z}D) \arrowr & \cdots \arrowr &
H_0(\coker(T\otimes\varepsilon)) \arrowr & 0
\end{tikzcd}
\]
By diagram chasing, we deduce an exact sequence
\[\begin{tikzpicture}[baseline= (a).base]
\node[scale=0.85] (a) at (0,0){
	\hspace{-2em}\begin{tikzcd}[column sep=0.5em,row sep=0.5ex]
	\, & H_2(T\otimes_{\Z[N]}\LS_{\Z[N]|\Z[M]}) \arrowr & H_2(T\otimes_B\LS_{B|A})\oplus H_2(T\otimes_\Z D) \arrowr & H_2(T\otimes_B\LS_{(B,N)|(A,M)})  \arrowr & \phantom{0}
	\end{tikzcd}
};
\end{tikzpicture}\]
\vspace{-1em}
\[\begin{tikzpicture}[baseline= (a).base]
\node[scale=0.85] (a) at (0,0){
	\hspace{-1.3em}\begin{tikzcd}[column sep=0.5em,row sep=0.5ex]
	\, \arrowr & H_1(T\otimes_{\Z[N]}\LS_{\Z[N]|\Z[M]}) \arrowr & \qquad\qquad\qquad \cdots \qquad\qquad\qquad\arrowr & H_0(T\otimes_B\LS_{(B,N)|(A,M)})  \arrowr & 0.
	\end{tikzcd}
};
\end{tikzpicture}\]

By \cite[Proposition~15.12]{An-1974} and Proposition~\ref{1.2}, this exact sequence takes the form
\[\begin{tikzpicture}[baseline= (a).base]
\node[scale=0.9] (a) at (0,0){
	\hspace{-1.2em}\begin{tikzcd}[column sep=0.8em,row sep=0.5ex]
	\, & H_2(\Z[M],\Z[N],T) \arrowr & H_2(A,B,T)\oplus H_2(T\otimes_\Z D) \arrowr & H_2(T\otimes_B\LS_{(B,N)|(A,M)})  \arrowr & \phantom{0}\\
	\, \arrowr & H_1(\Z[M],\Z[N],T) \arrowr & H_1(A,B,T)\oplus H_1(T\otimes_\Z D) \arrowr & H_1(T\otimes_B\LS_{(B,N)|(A,M)})  \arrowr & \phantom{0}\\
	\, \arrowr & H_0(\Z[M],\Z[N],T) \arrowr & H_0(A,B,T)\oplus H_0(T\otimes_\Z D) \arrowr & H_0(T\otimes_B\LS_{(B,N)|(A,M)})  \arrowr & 0
	\end{tikzcd}
};
\end{tikzpicture}\]
where
\begin{itemize}
	\item $H_2\big(D\otimes_\Z W)= \Tor_1^\Z(T,\ker(M^\gp\to N^\gp)\big)$,
	\item $H_1(D\otimes_\Z W)$ appears in an exact sequence
	\[\hspace{-1.05cm}
	\begin{tikzcd}[row sep=0em, column sep=0.3em]
	0 \arrow[shorten <= -0.2em,shorten >= -0.2em]{r} & T\otimes_\Z\ker(M^\gp\hspace{-0.1em}\to \hspace{-0.1em}N^\gp)\arrow[shorten <= -0.2em,shorten >= -0.2em]{r} & H_1(D\otimes_\Z W) \arrow[shorten <= -0.2em,shorten >= -0.2em]{r} & \Tor_1^\Z\big(T,\coker(M^\gp\hspace{-0.1em}\to \hspace{-0.1em}N^\gp)\big) \arrow[shorten <= -0.2em,shorten >= -0.2em]{r} & 0 ,
	\end{tikzcd}\]
	\item $H_0(D\otimes_\Z W)= T\otimes_\Z\coker(M^\gp\to N^\gp)$.
\end{itemize}

Let $(A,M)\overset{\varphi}{\longrightarrow} (F,R) \overset{\psi}{\longrightarrow} (B,N)$ be a factorization in the category of\linebreak simplicial prelog rings with $\varphi$ a free cofibration and $\psi$ a trivial fibration as in \cite[Section 4]{CM}. We have a commutative diagram of complexes
\[\begin{tikzpicture}[baseline= (a).base]
\node[scale=1] (a) at (0,0){
	\begin{tikzcd}[row sep=2.5em, column sep=-1em]
	T\otimes_{\Z[R]}\Omega_{\Z[R]|\Z[M]} \ar{dd}  \ar{dr} \ar{rr} & & T\otimes_\Z R^\gp/M^\gp \ar{dd}\ar{dr} & \\
	& T\otimes_{\Z[N]}\LS_{\Z[N]|\Z[M]} \ar[crossing over]{rr} & & T\otimes_{\Z}D \ar{dd}\\
	T\otimes_{F}\Omega_{F|A} \ar{rr} \ar{dr} & & T\otimes_B\LLL_{(B,N)|(A,M)} \ar{dr} & \\
	& T\otimes_B\LS_{B|A} \ar{rr} \ar[crossing over, leftarrow]{uu}  & & T\otimes_B\LS_{(B,N)|(A,M)}
	\end{tikzcd}
};
\end{tikzpicture}\]
inducing a morphism from the exact sequence of \cite[Theorem 4.3]{CM} into the above\linebreak exact sequence, and giving isomorphisms in two of each three terms (by \cite[Proposition~15.12]{An-1974} the ones corresponding to the oblique maps, and by direct inspection the one associated to the oblique upper right map, since we can take as $P_0^{gp}$, $Q$, the groups $R_0^{gp}$, $R_1^{gp}$ of the proof of \cite[Theorem 4.3]{CM}). Therefore, the remaining homomorphisms
\[ H_i(T\otimes_B\LLL_{(B,N)|(A,M)}) \longrightarrow H_i(T\otimes_B\LS_{(B,N)|(A,M)}) , \qquad \text{for } i=0,1,2, \]
are also isomorphisms.
\end{proof}

\section{A spectral sequence}\label{S::SS}
	\noindent
	
	Let $C\to B$ be a surjective homomorphism of rings. We have a convergent spectral sequence \cite[Theorem 6.8]{Quillen-MIT}
	\[ E_{p,q}^2 = H_{p+q}(S^q\LLL_{B|C}) \Rightarrow \Tor_{p+q}^C(B,B) , \]
	which when $A\to B$ is a flat homomorphism and $C=B\otimes_A B$ takes the form
	\[ E_{p,q}^2 = H_{p}(\wedge^q\LLL_{B|A}) \Rightarrow \HHH_{p+q}(B|A) \]
	where $\HHH_*(B|A)=\Tor_{p+q}^{B\otimes_A B}(B,B)$ is the Hochschild homology of the $A$-algebra $B$ and $S^q$, $\wedge^q$ denote symmetric and exterior powers respectively. We will give here an analogous spectral sequence for the logarithmic cotangent complex.
	
	Let $(C,Q)\to (B,N)$ be a homomorphism of prelog rings such that $C\to B$ and $Q\to N$ are surjective. Let $(C,Q)\to (R,P)\to (B,N)$ be a free cofibration\,-\,trivial fibration factorization as in \cite[Section 4]{CM} with $R_0=C$ and $P_0=Q$. Let $I=\ker(\pi: R\otimes_CB\to B)$, $J=\ker(\Z[P]\otimes_{\Z[Q]}\Z[N]\to\Z[N])$ and $T=\ker((P\oplus_Q N)^\gp \to N^\gp )$.
	
	Let $D_{(B,N)|(C,Q)}^1$ be the simplicial $R\otimes_CB$-module defined by the pushout
	\[
	\begin{tikzcd}[column sep=3em,row sep=4em]
	U \arrowr{\beta}\arrowd{\alpha} & B\otimes_\Z T \arrowd \\
	I \arrowr & D_{(B,N)|(C,Q)}^1
	\end{tikzcd}
	\]
	where $U=(R\otimes_CB)\otimes_{\Z[P]\otimes_{\Z[Q]}\Z[N]} J$, $\beta$ is the homomorphism induced by the map $J\to B\otimes_\Z T$ of the proof of \cite[Proposition 3.1]{CM} and the map $\pi\colon R\otimes_CB\to B$, and the homomorphism $\alpha$ is the obvious one.
	
	\begin{prop}\label{4.18}
		In this situation, we have an exact sequence
		\[\cdots \longrightarrow H_n(U) \longrightarrow  \Tor_{n}^C(B,B)\oplus W_n  \longrightarrow H_n\big(D_{(B,N)|(C,Q)}^1\big) \longrightarrow \quad\]
		\[  H_{n-1}(U) \longrightarrow  \qquad\cdots \qquad \longrightarrow H_1\big(D_{(B,N)|(C,Q)}^1\big) \longrightarrow 0\]
		where
		\[
		W_n =
		\begin{cases}
		\ker (Q^\gp \to N^\gp)\otimes_Z B  &\quad\text{if } n=1, \\
		\Tor_1^\Z (\ker (Q^\gp \to N^\gp),B)  &\quad\text{if } n=2, \\
		0  &\quad\text{if } 1\neq n \neq 2.
		\end{cases}
		\]
	\end{prop}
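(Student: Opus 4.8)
The plan is to read the asserted sequence off as the long exact homology sequence of a short exact sequence of simplicial modules, after computing two of its three terms.

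\emph{The defining pushout is short exact.} By definition $D^1_{(B,N)|(C,Q)}$ is the cokernel of $(\alpha,-\beta)\colon U\to I\oplus(B\otimes_\Z T)$, so it suffices to check that $\alpha$ is a monomorphism in each simplicial degree. In degree $n$, write $P_n=Q\oplus\N^{X_n}$ and $R_n=C[X_n\cup Y_n]$ (the monoid generators and the remaining ring generators over $(C,Q)$), so that $R_n\otimes_C B=B[X_n][Y_n]$, $\Z[P_n]\otimes_{\Z[Q]}\Z[N]=\Z[N][X_n]$, and $J_n=(x-\psi(x):x\in X_n)$, where $\psi$ records the structure map $X_n\to N$. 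The sequence $\{x-\psi(x)\}$ is regular in $\Z[N][X_n]$ and its image $\{x-\bar\psi(x)\}$ in $B[X_n][Y_n]$ is again regular (variables minus constants), so $\Tor^{\Z[N][X_n]}_{>0}(B[X_n][Y_n],\Z[N])=0$ and hence $U_n=B[X_n][Y_n]\otimes_{\Z[N][X_n]}J_n$ embeds into $B[X_n][Y_n]$, necessarily inside $I_n$. Therefore $0\to U\to I\oplus(B\otimes_\Z T)\to D^1_{(B,N)|(C,Q)}\to 0$ is exact; since $R_0=C$ and $P_0=Q$ we have $U_0=I_0=(B\otimes_\Z T)_0=0$, so its homology sequence has exactly the stated shape, terminating with $H_1(D^1_{(B,N)|(C,Q)})\to 0$.

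\emph{The term $I$.} The augmentation $R\otimes_C B\to B$ is a degreewise surjection with kernel $I$. As $R$ is a free (hence cofibrant) simplicial $C$-algebra weakly equivalent over $C$ to $B$, we have $\pi_\ast(R\otimes_C B)=\Tor^C_\ast(B,B)$, and $\Tor^C_0(B,B)=B\otimes_C B=B$ because $C\to B$ is surjective. The long exact sequence of $0\to I\to R\otimes_C B\to B\to 0$ then yields $H_0(I)=0$ and $H_n(I)=\Tor^C_n(B,B)$ for $n\geq 1$.

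\emph{The term $B\otimes_\Z T$.} Groupifying the monoid pushout gives $(P\oplus_Q N)^\gp=P^\gp\oplus_{Q^\gp}N^\gp$, which equals $P^\gp/\ker(Q^\gp\to N^\gp)$ since $Q^\gp\to N^\gp$ is onto; hence $T\cong\ker(P^\gp\to N^\gp)/\ker(Q^\gp\to N^\gp)$ as a simplicial abelian group, and it is degreewise free. The trivial fibration $P\to N$ is degreewise surjective, so $0\to\ker(P^\gp\to N^\gp)\to P^\gp\to N^\gp\to 0$ is exact; the description of the monoid part of the logarithmic cotangent complex in \cite{CM} gives $\pi_0(P^\gp)=N^\gp$ and $\pi_{\geq 1}(P^\gp)=0$, so $\ker(P^\gp\to N^\gp)$ is acyclic. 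Then the short exact sequence $0\to\ker(Q^\gp\to N^\gp)\to\ker(P^\gp\to N^\gp)\to T\to 0$, with $\ker(Q^\gp\to N^\gp)$ constant, forces $H_n(T)=\ker(Q^\gp\to N^\gp)$ for $n=1$ and $0$ otherwise. Since $T$ is degreewise free, the universal coefficient theorem gives $H_1(B\otimes_\Z T)=\ker(Q^\gp\to N^\gp)\otimes_\Z B=W_1$, $H_2(B\otimes_\Z T)=\Tor^\Z_1(\ker(Q^\gp\to N^\gp),B)=W_2$, and $H_n(B\otimes_\Z T)=0=W_n$ for $n\notin\{1,2\}$.

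Substituting the last two computations into the homology sequence of the first paragraph yields the proposition. The one step requiring genuine input is the computation of $H_\ast(T)$, which rests on the vanishing $\pi_{\geq 1}(P^\gp)=0$; should one prefer not to quote \cite{CM} for this, one argues that degreewise group completion is a left Quillen functor $\mathbf{sMon}\to\mathbf{sAb}$ (its right adjoint, the inclusion of abelian groups, preserves fibrations and trivial fibrations), so $P^\gp$ computes its total left derived functor on the cofibrant object $P$, and one checks that this derived group completion of the discrete monoid $N$ is concentrated in degree $0$.
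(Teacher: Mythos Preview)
Your proof is correct and follows essentially the same route as the paper: both obtain the long exact sequence from the degreewise short exact sequence $0\to U\to I\oplus(B\otimes_\Z T)\to D^1_{(B,N)|(C,Q)}\to 0$, identify $H_n(I)$ with $\Tor_n^C(B,B)$ via the augmentation sequence, and compute $H_n(B\otimes_\Z T)$ from the homotopy of $T$ together with the universal coefficient theorem. The only organisational differences are that you justify the injectivity of $\alpha$ via the Koszul argument on the regular sequence $\{x-\psi(x)\}$ (the paper simply identifies $U_n$ with a sub-ideal of $I_n$ in $B[X_n\cup Y_n]$ directly), and you compute $H_\ast(T)$ through the quotient description $T\cong\ker(P^\gp\to N^\gp)/\ker(Q^\gp\to N^\gp)$ and the acyclicity of $\ker(P^\gp\to N^\gp)$, whereas the paper uses the pair of short exact sequences built from $P^\gp/\ker(Q^\gp\to N^\gp)$; both unpack the same fact that $P^\gp\to N^\gp$ is a weak equivalence, which you make explicit (and even sketch an independent justification via the left Quillen property of groupification), while the paper uses it tacitly.
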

	\begin{proof}
		If $P=Q\oplus \N^Y$, $R=C[X\cup Y]$, then $J$ is the ideal of $\Z[P]\otimes_{\Z[Q]}\Z[N]=\Z[N][Y]$ generated by $Y$, $I$ is the ideal of $R\otimes_CB= B[X\cup Y]$ generated by $X\cup Y$, and the map $\alpha$ in the diagram is (split) injective. Thus, we have an exact sequence
		\[ \cdots \to H_n(U) \to H_n(I)\oplus H_n(B\otimes_\Z T) \to H_n\big(D_{(B,N)|(C,Q)}^1\big) \to H_{n-1}(U) \to \cdots \ .\]
		
		Since $0\to I\to R\otimes_CB\to B\to 0$ is exact and $H_n(R\otimes_CB)= \Tor_n^C(B,B)$, we have $H_n(I)= \Tor_n^C(B,B)$ if $n>0$ and $H_0(I)=0$ (in fact, $I_0=0$). We also have $J_0=0$ and then $H_0(U)=0$.
		
		It remains to compute $H_n(B\otimes_\Z T)$. Since
		\[P^\gp \oplus_{Q^\gp}N^\gp = P^\gp/\ker(Q^\gp \to N^\gp),\] we have an exact sequence
		\begin{equation}\label{4.2.1}
			0 \longrightarrow T \longrightarrow P^\gp / \ker(Q^\gp \to N^\gp) \longrightarrow N^\gp \longrightarrow 0
		\end{equation}
		
		From the exact sequence
		\[ 0 \longrightarrow \ker(Q^\gp \to N^\gp) \longrightarrow P^\gp \longrightarrow P^\gp / \ker(Q^\gp \to N^\gp) \longrightarrow 0 \]
		we deduce $H_n(P^\gp / \ker(Q^\gp \to N^\gp) )=0$ for $n\geq 2$ and an exact sequence
		\[\hspace{4cm}0 \longrightarrow H_1\big(P^\gp / \ker(Q^\gp \to N^\gp)\big) \longrightarrow \]
		\[\hspace{-1cm} \ker(Q^\gp \to N^\gp) \longrightarrow N^\gp \longrightarrow H_0\big(P^\gp / \ker(Q^\gp \to N^\gp)\big)\longrightarrow 0 \]
		showing that
		\[ H_1\big(P^\gp / \ker(Q^\gp \to N^\gp) \big) = \ker(Q^\gp \to N^\gp) ,\]
		\[ H_0\big(P^\gp / \ker(Q^\gp \to N^\gp)\big) = N^\gp .\]
		
		Then, from the exact sequence \eqref{4.2.1} we obtain $H_n(T)=0$ for all $n\geq 2$, $H_1(T)=\ker(Q^\gp \to N^\gp)$ and an exact sequence
		\[ 0 \longrightarrow H_0(T) \longrightarrow N^\gp \xrightarrow{=} N^\gp \longrightarrow 0\]
		showing that $H_0(T)=0$.
		
		By the universal coefficient theorem, we deduce
		\[
		H_n(B\otimes_\Z T)=
		\begin{cases}
		H_1(T)\otimes_\Z B=\ker (Q^\gp \to N^\gp)\otimes_Z B  &\quad\text{if } n=1, \\
		\Tor_1^\Z (\ker (Q^\gp \to N^\gp),B)  &\quad\text{if } n=2, \\
		0  &\quad\text{if } 1\neq n \neq 2.
		\end{cases}
		\]
	\end{proof}
	
	\begin{ex}
		We are going to see that $H_1(D_{(B,N)|(C,Q)}^1)$ is the conormal module $N_{(B,N)|(C,Q)}$ defined in \cite[Section 3]{CM}.
		
		Let $\Z[P]\otimes_{\Z[Q]}\Z[N]\to X\to R\otimes_CB$ be a cofibration\,-\,trivial fibration factorization, and consider the derived tensor product
		\[\tilde{U}=X\otimes_{\Z[P]\otimes_{\Z[Q]}\Z[N]}J=(R\otimes_CB)\overset{L}{\otimes}_{\Z[P]\otimes_{\Z[Q]}\Z[N]}J .\] We have an exact sequence defining $Z$
		\[0 \longrightarrow Z \longrightarrow \tilde{U} \longrightarrow U \longrightarrow 0 \]
		and $\tilde{U}_0=0=U_0$ since $J_0=0$. Therefore $Z_0=0$ and then $H_1(\tilde{U})\to H_1(U)$ is surjective. So from Proposition~\ref{4.18} we have an exact sequence
		\[ H_1(\tilde{U}) \longrightarrow \aaa/\aaa^2\oplus (\ker(Q^\gp \to N^\gp)\otimes_\Z B) \longrightarrow H_1\big(D_{(B,N)|(C,Q)}^1\big) \longrightarrow 0 \]
		where $\aaa=\ker(C\to B)$.
		
		Let us compute $H_1(\tilde{U})$. By \cite[II, Theorem 6]{Quillen-HA} we have a spectral sequence
		\[ E_{p,q}^2=\Tor_{p}^{\Tor_{*}^{\Z[Q]}(\Z[N],\Z[N])}\big(\Tor_*^C(B,B),H_*(J)\big)_q \;\Rightarrow\; H_{p+q}(\tilde{U}). \]
		Moreover, from the $\Tor$ long exact sequence associated to the exact sequence
		\[ 0 \longrightarrow H_*(J) \longrightarrow \Tor_*^{\Z[Q]}(\Z[N],\Z[N]) \longrightarrow \Z[N] \longrightarrow 0 , \]
		we obtain
		\[ E_{p,q}^2=\Tor_{p+1}^{\Tor_{*}^{\Z[Q]}(\Z[N],\Z[N])}\big(\Tor_*^C(B,B),\Z[N]\big)_q \]
		for $p>0$ and an exact sequence
		\[ 0 \longrightarrow \Tor_{1}^{\Tor_{*}^{\Z[Q]}(\Z[N],\Z[N])}\big(\Tor_*^C(B,B),\Z[N]\big)_q \longrightarrow E_{0,q}^2 \longrightarrow \]
		\[ \longrightarrow \Tor_{q}^{C}(B,B) \longrightarrow \big(\Tor_*^C(B,B)\otimes_{\Tor_{*}^{\Z[Q]}(\Z[N],\Z[N])} \Z[N]\big)_q \longrightarrow 0. \]
				
		When $p>0$, we have then
		\begin{align*}
			E_{p,0}^2&=\Tor_{p+1}^{\Tor_{0}^{\Z[Q]}(\Z[N],\Z[N])}\big(\Tor_0^C(B,B),\Z[N]\big)\\
			&=\Tor_{p+1}^{\Z[N]}(B,\Z[N])\\
			&=0
		\end{align*}
		In particular, $E_{1,0}^2=0$ and $E_{2,0}^2=0$. We also have $E_{-2,2}^2=0$, since the spectral sequence is located in the first quadrant. Therefore $E_{0,1}^3=E_{0,1}^2$, and since $E_{0,1}^\infty=E_{0,1}^3$ (again because it is a first quadrant spectral sequence), we deduce $E_{0,1}^\infty=E_{0,1}^2$. Since $E_{1,0}^2=0$, we obtain $E_{1,0}^\infty=0$ and then, since $\HH_0(J)=0$,
		\begin{align*}
		H_1(\tilde{U})&=E_{0,1}^\infty= E_{0,1}^2 = \Tor_{0}^{\Tor_{*}^{\Z[Q]}(\Z[N],\Z[N])}\big(\Tor_*^C(B,B),H_*(J)\big)_1 \\
		& = \big(\Tor_*^C(B,B)\otimes_{\Tor_{*}^{\Z[Q]}(\Z[N],\Z[N])}H_*(J)\big)_1 \\
		& = \Tor_0^C(B,B) \otimes_{\Tor_{0}^{\Z[Q]}(\Z[N],\Z[N])}H_1(J) \\
		& = B\otimes_{\Z[N]}\bbb/\bbb^2
		\end{align*}
		where $\bbb=\ker(\Z[Q]\to \Z[N])$.	
		
		So we have an exact sequence
		\[ B\otimes_{\Z[N]}\bbb/\bbb^2 \longrightarrow \aaa/\aaa^2 \oplus (\ker(Q^\gp \to N^\gp)\otimes_\Z B) \longrightarrow H_1\big(D_{(B,N)|(C,Q)}^1\big) \longrightarrow 0 \]
		where the first map is the sum of the canonical map $B\otimes_{\Z[N]}\bbb/\bbb^2 \to \aaa/\aaa^2$ and the map $B\otimes_{\Z[N]}\bbb/\bbb^2\to \ker (Q^\gp \to N^\gp)\otimes_\Z B$ of \cite[Proposition 3.1]{CM}. By \cite[Definition 3.4]{CM}, we have then
		\[ H_1\big(D_{(B,N)|(C,Q)}^1\big) = N_{(B,N)|(C,Q)} .\]
		
		Note that in the case $C=B\otimes_AB$, $Q=N\oplus_MN$ where $(A,M)\to (B,N)$ is a homomorphism of prelog rings we have
		\[ H_1\big(D_{(B,N)|(C,Q)}^1\big) = \Omega_{(B,N)|(A,M)} .\]
	\end{ex}
	
	\begin{ex}
		If $Q=N$, then $J=T=0$ and so $D_{(B,N)|(C,Q)}^1$ is the pushout
		\[
		\begin{tikzcd}[column sep=3em,row sep=4em]
		0 \arrowr\arrowd & 0 \arrowd \\
		I \arrowr & D_{(B,N)|(C,Q)}^1
		\end{tikzcd}
		\]
		showing that $H_n(D_{(B,N)|(C,Q)}^1)=\Tor_n^C(B,B)$ for $n>0$. In the case where $(A,M)\to (B,N)$ is a homomorphism of prelog rings, with $A\to B$ flat and $M=N$, taking $(C,Q)=(B\otimes_AB,N\oplus_MN)$, we have $H_n\big(D_{(B,N)|(C,Q)}^1\big)=\HHH_n(B|A)$ for $n>0$.
	\end{ex}
	
	\begin{thm}\label{4.21}
		We have a first quadrant convergent spectral sequence
		\[ E_{p,q}^2 \;\Rightarrow\; H_{p+q}\big(D_{(B,N)|(C,Q)}^1\big) \]
		where $E_{p,q}^2=0$ for $q=0$ and $E_{p,1}^2=H_{p+1}\big(\LLL_{(B,N)|(C,Q)}\big)$.
	\end{thm}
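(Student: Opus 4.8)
The plan is to imitate Quillen's proof of \cite[Theorem 6.8]{Quillen-MIT}. There one filters a free simplicial resolution $\bar R_\bullet$ of $B$ over $C$ by the powers of its augmentation ideal $I_\bullet=\ker(\bar R_\bullet\to B)$; since $\bar R_\bullet$ is dimensionwise polynomial over $B$ one has $\operatorname{gr}^q\bar R_\bullet=S^q_B(I_\bullet/I_\bullet^2)$, and as $I_\bullet/I_\bullet^2$ is a model for $\LLL_{B|C}$ this produces the spectral sequence recalled at the start of this section. I would run the same argument after exhibiting $D^1_{(B,N)|(C,Q)}$ as the augmentation ideal $\mathcal I_\bullet=\ker(\mathcal B_\bullet\to B)$ of a simplicial commutative $B$-algebra $\mathcal B_\bullet$ which is dimensionwise good enough that $\operatorname{gr}^q\mathcal B_\bullet\cong S^q_B(\mathcal I_\bullet/\mathcal I_\bullet^2)$ as simplicial $B$-modules, together with a quasi-isomorphism $\mathcal I_\bullet/\mathcal I_\bullet^2\simeq\LLL_{(B,N)|(C,Q)}$.

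Concretely, keeping the notation of this section, set $\bar R_\bullet=R\otimes_CB$; by the choice of factorization $\bar R_0=B$, and by Proposition~\ref{4.18} one has $D^1_0=0$. I would build $\mathcal B_\bullet$ from $\bar R_\bullet$ by gluing on, along $U$, the logarithmic direction carried by $T=\ker((P\oplus_QN)^\gp\to N^\gp)$, using the group algebra attached to $T$; since $P$ is free over $Q$, $T$ is dimensionwise a free abelian group, so this group algebra is dimensionwise smooth over $B$ and tensoring it against the polynomial algebra $\bar R_\bullet$ keeps the required behaviour of symmetric powers. As in the proof of Proposition~\ref{4.18} the gluing datum $\alpha$ is split injective (because $Y$ is a regular sequence in $\Z[P]\otimes_{\Z[Q]}\Z[N]$), and I would use this to check that $\mathcal B_\bullet\to B$ is split with augmentation ideal $\mathcal I_\bullet=D^1_{(B,N)|(C,Q)}$ and that $\operatorname{gr}^q\mathcal B_\bullet\cong S^q_B\bigl(D^1_\bullet/(D^1_\bullet)^2\bigr)$. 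Finally the identification $D^1_\bullet/(D^1_\bullet)^2\simeq\LLL_{(B,N)|(C,Q)}$ would follow from the conormal exact sequence of the composite augmentation together with the description of the logarithmic cotangent complex of \cite[Section 4]{CM} for a free cofibration\,-\,trivial fibration factorization: the ordinary conormal direction of $\bar R_\bullet\to B$ contributes $\Omega_{R|C}\otimes_RB$, the group-algebra direction contributes $B\otimes_\Z T$, and the relations imposed along $U$ are exactly those built into the module of logarithmic differentials of \cite{CM}.

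Granting this, the $\mathcal I$-adic filtration on $\mathcal B_\bullet$ is exhaustive, and since $\mathcal I_0=0$ the conormal complex $\mathcal I_\bullet/\mathcal I_\bullet^2$ is concentrated in simplicial degrees $\ge1$, so $S^q_B(\mathcal I_\bullet/\mathcal I_\bullet^2)$ is concentrated in degrees $\ge q$ and only finitely many filtration steps contribute to each total degree. Hence, by Quillen's argument \cite[Theorem 6.8]{Quillen-MIT}, the filtration yields a first quadrant convergent spectral sequence whose $E^2$-page is assembled from the groups $H_{p+q}\bigl(S^q_B(D^1_\bullet/(D^1_\bullet)^2)\bigr)$ and which abuts to $H_{p+q}(\mathcal B_\bullet)$. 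The unit $B\to\mathcal B_\bullet$ splits off the copy of $B$ sitting in total degree $0$, which is precisely the $q=0$ strand; passing to the subcomplex $\mathcal I_\bullet=D^1_{(B,N)|(C,Q)}$ removes it and gives a first quadrant convergent spectral sequence $E^2_{p,q}\Rightarrow H_{p+q}\bigl(D^1_{(B,N)|(C,Q)}\bigr)$ with $E^2_{p,0}=0$ and, since $S^1_B(\mathcal I_\bullet/\mathcal I_\bullet^2)=\mathcal I_\bullet/\mathcal I_\bullet^2\simeq\LLL_{(B,N)|(C,Q)}$, with $E^2_{p,1}=H_{p+1}\bigl(\LLL_{(B,N)|(C,Q)}\bigr)$. (As a consistency check, $H_1\bigl(D^1_{(B,N)|(C,Q)}\bigr)=E^\infty_{0,1}=E^2_{0,1}=H_1\bigl(\LLL_{(B,N)|(C,Q)}\bigr)$ recovers the conormal module $N_{(B,N)|(C,Q)}$ computed in the example above; note also $H_0(\LLL_{(B,N)|(C,Q)})=\Omega_{(B,N)|(C,Q)}=0$ because $C\to B$ and $Q\to N$ are surjective.)

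The main obstacle is the construction of $\mathcal B_\bullet$ and the proof that its conormal module is $\LLL_{(B,N)|(C,Q)}$: the logarithmic direction enters through the group $T$ via a group algebra rather than a polynomial algebra, so one must verify both that gluing it onto $\bar R_\bullet$ along $U$ still lets $\operatorname{gr}^q$ compute the symmetric powers of the conormal module dimensionwise, and that, after dividing out the relations carried by $U$, that conormal module coincides with the module of logarithmic differentials of \cite{CM} on the nose. Everything downstream is Quillen's filtration argument applied verbatim; in particular no information about $E^2_{p,q}$ with $q\ge2$ is required, so the difficult computation of the derived symmetric powers $H_*(S^q\LLL_{(B,N)|(C,Q)})$ is avoided.
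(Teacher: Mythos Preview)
Your sketch has a genuine gap at the point you yourself flag as the obstacle: there is no simplicial $B$-algebra $\mathcal B_\bullet$, built from $\bar R_\bullet$ and the group algebra $B[T]$ by gluing along $U$, whose augmentation ideal equals $D^1_{(B,N)|(C,Q)}$. The module $D^1$ is the pushout of $I\leftarrow U\xrightarrow{\beta} B\otimes_\Z T$, and the map $\beta$ lands in $B\otimes_\Z T$, not in the augmentation ideal $I_T=\ker(B[T]\to B)$; indeed $B\otimes_\Z T$ is only $I_T/I_T^2$, not $I_T$. So there is no ring homomorphism out of $\Z[P]\otimes_{\Z[Q]}\Z[N]$ (or any base change of it) into $B[T]$ compatible with $\beta$, and hence no algebra pushout realizing $D^1$ as an augmentation ideal. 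Consequently the step ``$\mathcal I_\bullet=D^1$'' fails, and with it the identification $\operatorname{gr}^q\mathcal B_\bullet\cong S^q_B(D^1/(D^1)^2)$.

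The paper avoids this by never seeking an ambient algebra. It filters $D^1$ directly by the \emph{images} $D^i$ of the ordinary powers $I^i\subset I$ under $I\to D^1$, for $i\ge 2$. A short diagram chase shows that $D^i$ is exactly $\tilde I^i$, where $\tilde I$ is the ideal generated by $X$ in the polynomial ring $B[X]=(R\otimes_CB)/\alpha(U)$; in particular the logarithmic summand $B\otimes_\Z T$ contributes only to $D^1/D^2$ and disappears from all deeper graded pieces. Quillen's connectivity estimate \cite[Theorem 8.8]{Quillen-MIT} then gives $H_n(D^i)=0$ for $i>n$, so the filtration spectral sequence \cite[XV, Proposition~4.1]{CE} converges in the first quadrant with $E^2_{p,q}=H_{p+q}(D^q/D^{q+1})$. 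The identification $D^1/D^2\cong\LLL_{(B,N)|(C,Q)}$ is a direct pushout computation using $I/I^2\cong B\otimes_R I_\omega/I_\omega^2$ and $T=P^\gp/Q^\gp$. Note in particular that for $q\ge 2$ the paper's $E^2_{p,q}$ are symmetric powers of $\tilde I/\tilde I^2$, \emph{not} of $\LLL_{(B,N)|(C,Q)}$; so even if your construction could be repaired, it would yield a different spectral sequence from the one actually produced here.
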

	\begin{proof}
		Consider the pushout
		\[
		\begin{tikzcd}[column sep=2em,row sep=4em]
		(R\otimes_CB)\otimes_{\Z[P]\otimes_{\Z[Q]}\Z[N]} J \arrowr{\beta}\arrowd{\alpha} & B\otimes_\Z T \arrowd \\
		I \arrowr & D_{(B,N)|(C,Q)}^1
		\end{tikzcd}
		\]
		We have already seen that $\alpha$ is split injective, and similarly, the map
		\[ (R\otimes_CB)\otimes_{\Z[P]\otimes_{\Z[Q]}\Z[N]} J^i \longrightarrow I^i\]
		is split injective for all $i>0$.
		
		For any $i \geq 2$, let $D^i$ be the image of the composition $I^i\hookrightarrow I \to D_{(B,N)|(C,Q)}^1$.
		
		Applying the Ker-Coker Lemma to the following diagram for $i\geq 2$,
		\begin{equation*}
			\hspace{-2cm}\begin{tikzcd}[column sep=2em,row sep=3em]
			0 \arrowr & (R\otimes_CB)\otimes_{\Z[P]\otimes_{\Z[Q]}\Z[N]}J^i \arrowr\arrowd & I^i \arrowr\arrow[tail]{d}{\epsilon} & \ \\
			0 \arrowr & (R\otimes_CB)\otimes_{\Z[P]\otimes_{\Z[Q]}\Z[N]}J \arrowr & I\oplus(B\otimes_\Z T) \arrowr{\beta} & \
			\end{tikzcd}
		\end{equation*}
		\begin{equation}\label{derradeirastar}
			\hspace{-0.75cm}\begin{tikzcd}[column sep=2em,row sep=3em]
			\ \arrowr & I^i/(R\otimes_CB)\otimes_{\Z[P]\otimes_{\Z[Q]}\Z[N]}J^i \arrow[d,dashed]\arrowr & 0 \\
			\ \arrowr{\beta} & D_{(B,N)|(C,Q)}^1 \arrowr & 0
			\end{tikzcd}
		\end{equation}
		(where the vertical morphism to the left is the obvious one, the one in the middle is the inclusion $I^i\longhookrightarrow I$ and the zero morphism on $B\otimes_\Z T$, and the square to the left is commutative since $i\geq 2$ and the homomorphism $J^2\to B\otimes_\Z T$ is zero) we obtain an exact sequence
		\[ (R\otimes_CB)\otimes_{\Z[P]\otimes_{\Z[Q]}\Z[N]} J /J^i \to I/I^i\oplus(B\otimes_\Z T) \to  D_{(B,N)|(C,Q)}^1/D^i \to 0 , \]
		which we can write as a pushout:
		\begin{equation}\label{sstar}
			\begin{tikzcd}[column sep=3em,row sep=4em]
			(R\otimes_CB)\otimes_{\Z[P]\otimes_{\Z[Q]}\Z[N]} J /J^i \arrowr\arrowd & B\otimes_\Z T \arrowd \\
			I/I^i \arrowr & D_{(B,N)|(C,Q)}^1/D^i
			\end{tikzcd}
		\end{equation}
		
		From the commutative diagram \eqref{derradeirastar} we deduce that $\ker \big(I^i\to D_{(B,N)|(C,Q)}^1\big)=\ker(\beta\epsilon)= I^i\cap \big((R\otimes_CB)\otimes_{\Z[P]\otimes_{\Z[Q]}\Z[N]}J\big)$, identifying $(R\otimes_CB)\otimes_{\Z[P]\otimes_{\Z[Q]}\Z[N]}J$ with its image in $I$ via $\alpha$, obtaining an exact sequence by definition of $D^i$
		\begin{equation}\label{ssstar}
		0 \longrightarrow I^i\cap \big((R\otimes_CB)\otimes_{\Z[P]\otimes_{\Z[Q]}\Z[N]}J\big) \longrightarrow I^i \longrightarrow D^i \longrightarrow 0 .
		\end{equation}
		
		Applying the Ker-Coker Lemma to the diagram of exact sequences for $1<j<i$,
		\[
		\begin{tikzcd}[column sep=3em,row sep=3em]
		0 \arrowr & I^i\cap \big((R\otimes_CB)\otimes_{\Z[P]\otimes_{\Z[Q]}\Z[N]}J\big) \arrowr\arrowd & I^i \arrowr\arrowd & D^i \arrow[tail]{d}\arrowr & 0 \\
		0 \arrowr & I^j\cap \big((R\otimes_CB)\otimes_{\Z[P]\otimes_{\Z[Q]}\Z[N]}J\big) \arrowr & I^j \arrowr & D^j \arrowr & 0
		\end{tikzcd}
		\]
		we obtain an exact sequence
		\[ 0 \longrightarrow \dfrac{I^j\cap \big((R\otimes_CB)\otimes_{\Z[P]\otimes_{\Z[Q]}\Z[N]}J\big)}{I^i\cap \big((R\otimes_CB)\otimes_{\Z[P]\otimes_{\Z[Q]}\Z[N]}J\big)} \longrightarrow I^j/I^i \longrightarrow D^j/D^i \longrightarrow 0 \]
		
		From the exact sequence \eqref{ssstar} we obtain
		\begin{align*}
			D^i&= I^i/\big( I^i\cap ((R\otimes_CB)\otimes_{\Z[P]\otimes_{\Z[Q]}\Z[N]}J)\big) \\
			&= \dfrac{I^i + \big((R\otimes_CB)\otimes_{\Z[P]\otimes_{\Z[Q]}\Z[N]}J\big)}{\big((R\otimes_CB)\otimes_{\Z[P]\otimes_{\Z[Q]}\Z[N]}J\big)}
		\end{align*}
		which is the ideal $\tilde{I}^i$ of $(R\otimes_CB) / \big((R\otimes_CB)\otimes_{\Z[P]\otimes_{\Z[Q]}\Z[N]}J\big) = B[X]$ where $\tilde{I}$ is the ideal of the simplicial ring $B[X]$ generated by the variables $X$. Therefore the hypotheses of \cite[Theorem 8.8]{Quillen-MIT} are verified (since $C\to B$ and $P\to N$ are surjective) and we obtain $H_n(D^i)=H_n(\tilde{I}^i)=0$ for all $i>n$.
		
		Putting $D^1:=D_{(B,N)|(C,Q)}^1$, we obtain a convergent spectral sequence \cite[XV, Proposition~4.1]{CE}
		\[ E_{p,q}^2=H_{p+q}\big(D^q/D^{q+1}\big) \;\Rightarrow\; H_{p+q}\big(D^1\big) . \]
		
		It remains to show that $D^1/D^2= \LLL_{(B,N)|(C,Q)}$. If we consider $R\otimes_CR$ as $R$-module via $r(a\otimes b)=a\otimes br$, we have a split exact sequence of $R$-modules (defining $I_\omega$)
		\[ 0 \longrightarrow I_\omega \longrightarrow R\otimes_CR \longrightarrow R \longrightarrow 0 .\]
		Applying $\ -\otimes_RB$, we obtain an split exact sequence
		\[ 0 \longrightarrow I_\omega\otimes_R B \longrightarrow R\otimes_CB \longrightarrow B \longrightarrow 0 \]
		which proves that $I_\omega\otimes_RB= I$.		
		
		Similarly, considering the ideal $J_\omega=\ker(\Z[P]\otimes_{\Z[Q]}\Z[P]\to \Z[P])$, we obtain $J_\omega\otimes_{\Z[P]}\Z[N]= J$. Therefore
		\[ I^i/ I^{i+1} = (I_\omega\otimes_R B)^i / (I_\omega \otimes_R B)^{i+1} = I_\omega^i\otimes_R B / I_\omega^{i+1} \otimes_R B = I_\omega^i/ I_\omega^{i+1} \otimes_R B\]
		and similarly
		\[ J^i/ J^{i+1} = J_\omega^i/ J_\omega^{i+1} \otimes_{\Z[P]} \Z[N] .\]
		
		Then the pushout \eqref{sstar} for $i=2$ takes the form
		\[
		\begin{tikzcd}[column sep=3em,row sep=4em]
		(R\otimes_CB)\otimes_{\Z[P]\otimes_{\Z[Q]}\Z[N]} (J_\omega/J_\omega^2\otimes_{\Z[P]}\Z[N]) \arrowr{\gamma}\arrowd{\lambda} & B\otimes_\Z T \arrowd \\
		B\otimes_RI_\omega/ I_\omega^2  \arrowr & D_{(B,N)|(C,Q)}^1/D^2
		\end{tikzcd}
		\]
		Since the homomorphisms $\lambda$ and $\gamma$ can be factorized by the surjective homomorphism
		\[ (R\otimes_CB)\otimes_{\Z[P]\otimes_{\Z[Q]}\Z[N]} \big(J_\omega/J_\omega^2\otimes_{\Z[P]}\Z[N]\big) \to B\otimes_{\Z[P]\otimes_{\Z[Q]}\Z[N]} \big(J_\omega/J_\omega^2\otimes_{\Z[P]}\Z[N]\big) \]
		that is obtained by applying $\ -\otimes_{\Z[P]\otimes_{\Z[Q]}\Z[N]} \big(J_\omega/J_\omega^2\otimes_{\Z[P]}\Z[N]\big)$ to the surjective morphism $R\otimes_CB\to B$, we obtain a pushout
		\[
		\begin{tikzcd}[column sep=3em,row sep=4em]
		B\otimes_{\Z[P]\otimes_{\Z[Q]}\Z[N]} (J_\omega/J_\omega^2\otimes_{\Z[P]}\Z[N]) \arrowr\arrowd & B\otimes_\Z T \arrowd \\
		B\otimes_RI_\omega/ I_\omega^2  \arrowr & D_{(B,N)|(C,Q)}^1/D^2
		\end{tikzcd}
		\]
		and additionally, since
		\begin{align*}
		B\otimes_{\Z[P]\otimes_{\Z[Q]}\Z[N]} (J_\omega/J_\omega^2\otimes_{\Z[P]}\Z[N]) &= B\otimes_{\Z[N]}\Z[N]\otimes_{\Z[P]\otimes_{\Z[Q]}\Z[P]} J_\omega/J_\omega^2 \\
		&= B\otimes_{\Z[N]} J_\omega/J_\omega^2
		\end{align*}
		(because the $\Z[P]\otimes_{\Z[Q]}\Z[P]$-module $J_\omega/J_\omega^2$ is already a $\Z[N]$-module),
		we obtain a pushout
		\[
		\begin{tikzcd}[column sep=3em,row sep=4em]
		B\otimes_{\Z[N]} (J_\omega/J_\omega^2) \arrowr\arrowd & B\otimes_\Z T \arrowd \\
		B\otimes_RI_\omega/ I_\omega^2  \arrowr & D_{(B,N)|(C,Q)}^1/D^2
		\end{tikzcd}
		\]
		
		Finally, we deduce $T=\ker\big(P^\gp/\ker(Q^\gp\to N^\gp)\to N^\gp\big)= P^\gp/Q^\gp $  given that $P_0^\gp= Q^\gp$. Thus, the previous pushout gives us $\LLL_{(B,N)|(C,Q)}$ by definition.
	\end{proof}

\end{document}